\newtheorem{lemma}{Lemma}[section]
\newtheorem{proposition}[lemma]{Proposition}
\newtheorem{theorem}[lemma]{Theorem}
\theoremstyle{definition}
\newtheorem{definition}[lemma]{Definition}
\theoremstyle{plain}
\def\C{\mathbb C}
\def\Z{\mathbb{Z}}
\def\to{\rightarrow}
\def\SL{\mathrm{SL}}
\def\tr{\mathrm{tr}}
\def\S{\mathbb{S}}
\def\l{\langle\!\langle}
\def\r{\rangle\!\rangle}
\def\Im{\mathrm{Im}}
\def\i{\mathbf{i}}
\title[The ghost character of the (4,5)-torus knot]
{The ghost character of the (4,5)-torus knot and its applications}
\author{Fumikazu Nagasato and Shinnosuke Suzuki}
\address{Department of Mathematics, Meijo University, 
Tempaku, Nagoya 468-8502, Japan}
\email{fukky@meijo-u.ac.jp}
\address{Toyota Technical Development Corporation (TTDC), 
1-9 Imae, Hanamoto-cho, Toyota, Aichi 470-0334, Japan}
\email{}
\subjclass[2010]{Primary 57M27; Secondary 57M25}
\keywords{character varieties, ghost characters, trace-free representations}
\begin{document}

\begin{abstract}
We show that the (4,5)-torus knot $T_{4,5}$ admits exactly one ghost character. 
We then show that this ghost character provides the following two important results.  
\begin{enumerate}
\item It is known that for any knot $K$ every (meridionally) trace-free 
$\SL_2(\C)$-representation 
of the knot group $G(K)$ yields an $\SL_2(\C)$-representation of the fundamental group 
$\pi_1(\Sigma_2K)$ 
of the 2-fold branched cover $\Sigma_2K$ of the 3-sphere along $K$. 
This correspondence often but not always provides all $\SL_2(\C)$-representations 
of $\pi_1(\Sigma_2K)$. 
We show by using the ghost character that $T_{4,5}$ is the simplest torus knot
such that $\pi_1(\Sigma_2T_{4,5})$ admits an $\SL_2(\C)$-representation which cannot be 
realized by any trace-free $\SL_2(\C)$-representations.  
\item We show that $T_{4,5}$ is the simplest torus knot that provides 
a counterexample to Ng's conjecture, concerned with 
a polynomial map $h^*$ between the character variety $X(\Sigma_2K)$ of $\pi_1(\Sigma_2K)$ 
and the fundamental variety $F_2(K)$. 
More precisely, the map $h^*$ is surjective but not injective, and hence not an isomorphism 
for $T_{4,5}$. 
\end{enumerate}
\end{abstract}

\maketitle

\section{Background}\label{intro}

In \cite{Nagasato-Yamaguchi}, we discovered several important properties 
of the cross-section of the character variety $X(K)$ of the knot group $G(K)$, 
defined by intersecting $X(K)$ with hyperplane given by the trace-free condition   
$\tr(\rho(\mu_K))=0$, where $\rho$  is a representation $\rho: G(K) \to \SL_2(\C)$, 
and $\mu_K$ is a meridian of $K$.
We refer to this cross-section as the trace-free slice of the character variety $X(K)$ 
(or simply the trace-free slice of a knot $K$), and denote it by $S_0(K)$. 
One of the most significant properties of $S_0(K)$ is that it provides a framework 
in which the characters of trace-free representations yield a large subset 
of the character variety $X(\Sigma_2K)$ of the fundamental group 
$\pi_1(\Sigma_2K)$, where $\Sigma_2K$ is the 2-fold branched cover $\Sigma_2K$ 
of the 3-sphere $\S^3$ along the knot $K$. 
The framework is precisely described by the map $\widehat{\Phi}: S_0(K) \to X(\Sigma_2K)$,  
as introduced in \cite{Nagasato-Yamaguchi}. 
(For details, see Subsection \ref{subsec_trace-free}; 
see also \cite{Nagasato1,Nagasato-Yamaguchi} etc.) 
The map $\widehat{\Phi}$ is a powerful tool 
for providing $\SL_2(\C)$-representations of $\pi_1(\Sigma_2K)$ 
from trace-free $\SL_2(\C)$-representations of $G(K)$\footnote{Such 
an $\SL_2(\C)$-representation of $\pi_1(\Sigma_2K)$ is refered to as 
a $\tau$-equivalent representation. See \cite{Nagasato-Yamaguchi}.}. 
Indeed, the map $\widehat{\Phi}$ is known to be surjective 
for all 2-bridge knots and pretzel knots 
(see \cite[Theorem 1 and Lemma 23]{Nagasato-Yamaguchi}; 
see also \cite[Theorem 1.3]{Nagasato2}).  
Furthermore, it was shown in \cite[Theorem 4.9 (1)]{Nagasato5} that 
the map $\widehat{\Phi}$ is also surjective for any 3-bridge knot. 
From this perspective, the following natural question arises: 
\begin{center}
{\it Is the map $\widehat{\Phi}$ surjective for any knot?}
\end{center}
The answer is negative. 
In general, it is not easy to find a representation $\rho_*: \pi_1(\Sigma_2K) \to \SL_2(\C)$ 
that cannot arise from any trace-free representation of $G(K)$,  
since we basically need to compute all elements of both $S_0(K)$ and $X(\Sigma_2K)$ 
in order to compare them. 
However, the notion of a ghost character of a knot, introduced in \cite{Nagasato4} 
(see also \cite[Definition 4.7]{Nagasato5}), provides a relatively accessible way 
to detect the existence of such an $\SL_2(\C)$-representation of $\pi_1(\Sigma_2T_K)$. 

In \cite[Theorem 4.9 (1), (2)]{Nagasato5}, we established a criterion using ghost characters 
to determine when the map $\widehat{\Phi}$ fails to be surjective. 
In particular, if a knot admits no ghost characters, then the map $\widehat{\Phi}$ is 
surjective. Later, in \cite{Nagasato6}, applying the above criterion to a ghost character of 
the (5,6)-torus knot, we showed that $\widehat{\Phi}$ is not surjective for that knot. 

In the present paper, we show that an even simpler torus knot than $T_{5,6}$ 
admits a ghost character satisfying the criterion in \cite[Theorem 4.9 (2)]{Nagasato5}, 
and hence $\widehat{\Phi}$ is not surjective in this case either.  
\begin{theorem}\label{thm_main}
For the $(4,5)$-torus knot $T_{4,5}$, the map $\widehat{\Phi}$ is not surjective,  
that is, there exists an $\SL_2(\C)$-representation of $\pi_1(\Sigma_2T_{4,5})$ 
that cannot be obtained, via the map $\widehat{\Phi}$,  
from any trace-free representation of $G(T_{4,5})$. 
\end{theorem}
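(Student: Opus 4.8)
The plan is to produce a single ghost character of $T_{4,5}$ and to show that the $\SL_2(\C)$-character of $\pi_1(\Sigma_2 T_{4,5})$ it determines is not the image under $\widehat{\Phi}$ of any honest trace-free character of $G(T_{4,5})$. A ghost character is a point of the algebraic set defined by the trace identities attached to the trace-free slice that is \emph{not} realized by an actual trace-free $\SL_2(\C)$-representation of $G(T_{4,5})$; the branched-cover construction nevertheless assigns to it a genuine character of $\pi_1(\Sigma_2 T_{4,5})$. Since for a torus knot both relevant character varieties are finite unions of low-dimensional pieces, the argument is ultimately a matching computation: enumerate $X(\Sigma_2 T_{4,5})$, enumerate $\widehat{\Phi}(S_0(T_{4,5}))$, and locate a point of the former that is produced only by a ghost.

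First I would describe $X(\Sigma_2 T_{4,5})$. The $2$-fold branched cover $\Sigma_2 T_{4,5}$ is the Seifert fibered space $\Sigma(2,4,5)$ over $S^2$ with three exceptional fibers of orders $2,4,5$; its irreducible $\SL_2(\C)$-characters are rigid and are enumerated by the admissible triples of rotation numbers on the exceptional fibers, subject to the product-one and central conditions, while the reducible part is governed by $H_1(\Sigma_2 T_{4,5})\cong\Z/5$. This yields an explicit finite list of characters downstairs.

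Next I would compute the image of $\widehat{\Phi}$. Using the presentation $G(T_{4,5})=\langle u,v\mid u^4=v^5\rangle$ with central element $z=u^4=v^5$ and meridian a suitable word $\mu=u^a v^b$, the irreducible part of $X(T_{4,5})$ is the standard union of $(4-1)(5-1)/2=6$ arcs indexed by the eigenvalue data of $\rho(u)$ and $\rho(v)$. Imposing $\tr(\rho(\mu))=0$ cuts the trace-free slice $S_0(T_{4,5})$ out of these arcs as a finite point set; applying $\widehat{\Phi}$, which passes to the covering involution $\tau$ and records the induced character on $\pi_1(\Sigma_2 T_{4,5})$, produces the list of characters actually realized by trace-free representations. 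The key point is that the eigenvalue data admissible for a genuine trace-free representation of $G(T_{4,5})$ is more constrained than the rotation-number triples admissible for $\pi_1(\Sigma_2 T_{4,5})$.

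Comparing the two lists, I would exhibit a specific irreducible character of $\pi_1(\Sigma_2 T_{4,5})$ whose rotation-number triple is admissible downstairs but whose forced eigenvalue data upstairs violates the torus-knot constraint together with $\tr(\rho(\mu))=0$; this data is exactly a ghost character of $T_{4,5}$, and the character it determines lies outside $\Im\,\widehat{\Phi}$, proving the theorem. I expect the main obstacle to be the bookkeeping that links the Seifert rotation-number parameters of $\Sigma_2 T_{4,5}$ to the eigenvalue parameters on the knot-group arcs through the explicit form of $\widehat{\Phi}$, and then certifying that the candidate character is genuinely irreducible, so that it is not accidentally accounted for by a reducible trace-free character, while provably admitting no honest trace-free lift.
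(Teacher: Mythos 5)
There is a genuine gap, and it sits at the logical pivot of your argument. You assert that for a ghost character ``the branched-cover construction nevertheless assigns to it a genuine character of $\pi_1(\Sigma_2 T_{4,5})$.'' The map goes the other way: restricting traces along the elements $m_im_j$ gives a map $X(\Sigma_2K)\to F_2(K)$ (because $m_a^2=1$ in $\pi_1(\Sigma_2K)$ forces the fundamental relations (F2)), and there is no a priori reason that a ghost point of $F_2(T_{4,5})$ lies in the image of this map. Realizing the ghost by an actual representation of $\pi_1(\Sigma_2T_{4,5})$ is precisely the nontrivial half of the paper's proof: one computes the Fox presentation $\pi_1(\Sigma_2T_{4,5})\cong\langle x,y,z\mid w_1,\dots,w_6\rangle$ with $x=m_1m_2$, $y=m_1m_3$, $z=m_1m_4$, and then exhibits explicit matrices with $\tr\rho(m_1m_2)=\tr\rho(m_1m_4)=-1$ and $\tr\rho(m_1m_3)=1$. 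Only after that does the obstruction theorem (Theorem \ref{thm_nag}) apply: since the point $(-1,1,-1,-1,1,-1)$ violates a rectangle relation (R) -- a condition on the $x_{ij}$ coordinates alone -- it cannot be the trace restriction of any trace-free character, so the character downstairs realizing it lies outside $\Im\,\widehat{\Phi}$. Your proposal treats this realization step as automatic, which it is not.

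The second problem is that all the decisive content of your plan is deferred to an unperformed computation. Your ``key point'' -- that the eigenvalue data admissible for genuine trace-free representations of $G(T_{4,5})$ is more constrained than the rotation-number data admissible for $\pi_1(\Sigma_2T_{4,5})$ -- is essentially the theorem itself, and nothing in the proposal establishes it: no list on either side is produced, no specific character is exhibited, and the matching must in any case be carried out at the level of full characters (including the triple-trace coordinates $z_{1cde}$ and the $(\sqrt{-1})^{\alpha}$ sign twist in the definition of $\widehat{\Phi}$), not merely of pairwise meridian traces. Your enumeration framework is sound in outline -- $\Sigma_2T_{4,5}$ is indeed the Seifert fibered space $\Sigma(2,4,5)$ with finite irreducible character set, $X(T_{4,5})$ is the standard union of six arcs, and $S_0(T_{4,5})$ is finite -- so a brute-force comparison could in principle succeed, and would be a genuinely different route from the paper's. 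But the paper's method is markedly lighter: it never enumerates $X(\Sigma_2T_{4,5})$ or $S_0(T_{4,5})$ at all, instead reducing $F_2(T_{4,5})$ by an elimination adapted to the braid position to two variables $(x_{12},x_{13})$, finding six points, checking that $(-1,1)$ fails (R), and producing the single explicit representation above. Until you actually carry out your enumerations and certify a specific character outside $\Im\,\widehat{\Phi}$, the proposal is a plausible research plan rather than a proof.
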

We remark that the (4,5)-torus knot is the simplest torus knot for which 
$\widehat{\Phi}$ is not surjective. 
Indeed, all torus knots simpler than $T_{4,5}$ 
have bridge index at most 3, and thus admit no ghost characters. 
As mentioned above, for such knots, the map $\widehat{\Phi}$ is surjective.  

In the following sections, we briefly review the trace-free slice $S_0(K)$ 
and the notion of ghost characters of knots to explain the results mentioned before. 
We then show that the $(4,5)$-torus knot $T_{4,5}$ admits a ghost character 
that give rise to an $\SL_2(\C)$-representation of $\pi_1(\Sigma_2 T_{4,5})$, 
which essentially establishes the desired result (see Theorem \ref{thm_nag}).  
Furthermore, we demonstrate that this ghost character also yields 
another counterexample to Ng's conjecture. To be more precise, 
the map $h^*$ from the character variety $X(\Sigma_2T_{4,5})$ of 
the fundamental group $\pi_1(\Sigma_2T_{4,5})$ to 
the fundamental variety $F_2(T_{4,5})$, reviewed in Section \ref{sec_2}, 
is surjective but not injective. 


\section{Ghost characters of a knot}\label{sec_2}
Ghost characters of a knot are defined via the trace-free slice of 
the character variety of the knot group. 
We begin by reviewing the character variety of a knot group and its trace-free slice. 

\subsection{Brief review of the character variety of a finitely presented group}
We refer to \cite{Culler-Shalen} for the definition of character varieties.  
Let $G$ be a finitely presented group generated by 
$n$ elements $g_1,\cdots,g_n$. Given a representation $\rho:G \to \SL_2(\C)$, 
let $\chi_{\rho}$ be the character of $\rho$, which is the function on $G$ 
defined by $\chi_{\rho}(g)=\tr(\rho(g))$ for $g \in G$. 
We denote by $\mathfrak{X}(G)$ the set of the characters 
of $\SL_2(\C)$-representations of $G$. 
As shown in \cite{Culler-Shalen, Fricke, Horowiz, Vogt} 
(see also \cite{Gonzalez-Montesinos}), the $\SL_2(\C)$-trace identity
\[
\tr(AB)=\tr(A)\tr(B)-\tr(AB^{-1})\hspace*{0.5cm}(A,B\in\SL_2(\C))
\]
implies that the trace function $t_g(\rho):=\tr(\rho(g))$, for any $g \in G$, 
can be expressed as a polynomial in the traces: 
\[
\begin{array}{ll}
t_{g_i}(\rho) & (1\leq i\leq n),\\
t_{g_ig_j}(\rho) & (1\leq i<j \leq n),\\ 
t_{g_ig_jg_k}(\rho) & (1 \leq i<j<k \leq n). 
\end{array}
\]
It is known that the image of $\mathfrak{X}(G)$ under the map 
\[
t:\mathfrak{X}(G) \to \C^{n+{n\choose 2}+{n\choose 3}},\ 
t(\chi_{\rho})=\left(t_{g_i}(\chi_{\rho});t_{g_ig_j}(\chi_{\rho});t_{g_ig_jg_k}(\chi_{\rho})\right), 
\]
where we extend the trace function by setting $t_{g}(\chi_{\rho})=t_{g}(\rho)$,  
forms a closed algebraic set in $\C^{n+{n\choose 2}+{n\choose 3}}$.  
This algebraic set is called the character variety of $G$, 
and denoted by $X(G)$. 
Although the character varieties have been computed for many classes of knots, 
it remains difficult to determine their defining polynomials in general. 


\subsection{Trace-free slice of the character variety}\label{subsec_trace-free}
Let $G(K)$ denote the knot group of a knot $K$, and $\mu_K$ a meridian of $K$. 
A representation $\rho: G(K) \to \SL_2(\C)$ is said to be trace-free\footnote{Such 
a representation is also referred to as traceless.} if $\tr(\rho(\mu_K))=0$. 
We also refer to its character $\chi_{\rho}$ as a trace-free character. 
The set $\mathfrak{S}_0(K)$ of trace-free characters forms 
a subset of $\mathfrak{X}(K)=\mathfrak{X}(G(K))$: 
\[
\mathfrak{S}_0(K)=\{\chi_{\rho}\in \mathfrak{X}(K) \mid \chi_{\rho}(\mu_K)=0\}. 
\]
Analogously to the construction of $X(K)$, 
the set $\mathfrak{S}_0(K)$ can be realized as a closed algebraic subset $S_0(K)$ 
of the character variety $X(K)$ via the map $t$. 
We refer to this algebraic set $S_0(K)$ as the trace-free slice of $X(K)$, 
or simply the trace free slice of $K$. 
Given a Wirtinger presentation 
\[
G(K)=\langle m_1,\cdots,m_n \mid r_1,\cdots,r_n \rangle, 
\]
the condition $t_{\mu_K}(\chi_{\rho})=0$ implies that $t_{m_i}(\chi_{\rho})=0$ 
for any $1 \leq i \leq n$. Hence we have 
{\small
\[
S_0(K) = t(\mathfrak{S}_0(K))
\cong \left\{\left.\left(t_{m_im_j}(\chi_{\rho});t_{m_im_jm_k}(\chi_{\rho})\right)
\in \C^{{n\choose 2}+{n\choose 3}} \right| \chi_{\rho}\in\mathfrak{S}_0(K) \right\}. 
\]}
Based on this description, the following theorem provides a powerful tool 
for computing the trace-free slice $S_0(K)$. 
For a diagram $D_K$ of a knot $K$, a triple $(i,j,k)$ with $j<k$ is called a Wirtinger triple 
if $i$th arc $a_i$, $j$th arc $a_j$ and $k$th arc $a_k$ of $D_K$ meet 
at a crossing in such a way that $a_i$ is the overarc and $a_j, a_k$ are the underarcs. 

\begin{theorem}[cf. \cite{Nagasato4}, Theorem 3.2 in \cite{Gonzalez-Montesinos}]
\label{defpoly_S0K}
Let $G(K)=\langle m_1,\cdots,m_n \mid r_1,\cdots,r_n \rangle$ be 
a Wirtinger presentation. Then the trace-free slice $S_0(K)$ is isomorphic to 
the following algebraic set in $\C^{{n \choose 2}+{n \choose 3}}$: 
\[
S_0(K) \cong 
\left\{\left.
(x_{12},\cdots,x_{nn-1}; x_{123},\cdots,x_{n-2,n-1,n})\in\C^{{n \choose 2}+{n \choose 3}}
\right| {\rm (F2)},{\rm (GH)} \right\},
\]
where ${\rm (F2)}$ and ${\rm (GH)}$ are the polynomial relations defined as follows: 
\begin{description}
\item[(F2)] the fundamental relations 
\begin{eqnarray*}
&x_{ak}=x_{ij}x_{ai}-x_{aj},&\\
&(1 \leq a \leq n,\ (i,j,k):\mbox{any Wirtinger triple}),&
\end{eqnarray*}
\item[(GH)] the general hexagon relations 
\begin{eqnarray*}
&x_{i_1 i_2 i_3} \cdot x_{j_1 j_2 j_3}
=\dfrac{1}{2}
\left|
\begin{array}{ccc}
x_{i_1 j_1} & x_{i_1 j_2} & x_{i_1 j_3}\\
x_{i_2 j_1} & x_{i_2 j_2} & x_{i_2 j_3}\\
x_{i_3 j_1} & x_{i_3 j_2} & x_{i_3 j_3}
\end{array}\right|,&\\
&(1 \leq i_1<i_2<i_3 \leq n,\ 1 \leq j_1<j_2<j_3 \leq n).&
\end{eqnarray*}
\end{description}
with $x_{ii}=2$, $x_{ji}=x_{ij}$ and 
$x_{i_{\sigma(1)}i_{\sigma(2)}i_{\sigma(3)}}=\mathrm{sign}(\sigma)x_{i_1i_2i_3}$ 
for any element $\sigma$ of the permutation group $\mathfrak{S}_3$ of degree $3$. 
\end{theorem}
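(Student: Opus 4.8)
The plan is to prove the isomorphism by exploiting the linear algebra of trace-free $\SL_2(\C)$ matrices, which turns the whole statement into a dictionary between trace coordinates and a $3$-dimensional quadratic space. First I would record the basic identities: if $\rho$ is trace-free, then each $A_i:=\rho(m_i)$ satisfies its characteristic equation $A_i^2=-I$, hence $A_i^{-1}=-A_i$, and any two obey the Clifford-type anticommutation $A_iA_j+A_jA_i=\tr(A_iA_j)\,I$. Setting $x_{ij}:=-\tr(A_iA_j)$, so that $x_{ii}=-\tr(A_i^2)=2$, and $x_{ijk}:=\tr(A_iA_jA_k)$, the space of trace-free matrices becomes a $3$-dimensional quadratic space $V$ in which $x_{ij}=2\langle v_i,v_j\rangle$ is twice an inner product and $x_{ijk}=2[v_i,v_j,v_k]$ is twice the scalar triple product, where $v_i\in V$ corresponds to $A_i$. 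Under this dictionary the three families of relations acquire transparent meanings, which I would verify in turn.

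To see that $t(\mathfrak{S}_0(K))$ lands in the prescribed set, I would check (F2), (H), (R) directly. For (F2): at a crossing with Wirtinger triple $(i,j,k)$ the relator gives $A_k=A_iA_jA_i^{-1}=-A_iA_jA_i$; multiplying by $A_a$, using $A_i^2=-I$ together with the anticommutation relation, and taking traces yields $x_{ak}=x_{ij}x_{ai}-x_{aj}$ after the sign normalization above, so the group relators $r_1,\dots,r_n$ translate exactly into the fundamental relations. For (H): because $\tr(A_iA_jA_k)$ is an alternating trilinear form on the $3$-dimensional $V$, it is a scalar multiple of the triple product, and the classical identity writing the product of two triple products as the determinant of the matrix of pairwise inner products reproduces the hexagon relation together with its factor $\tfrac12$. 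For (R): four trace-free matrices are always linearly dependent in the $3$-dimensional $V$, so the $4\times4$ Gram matrix $(x_{\alpha\beta})$ with diagonal $2$ is singular, which is precisely the rectangle relation.

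The substantial direction is the converse: every point $(x_{ij};x_{ijk})$ satisfying (F2), (H), (R) must be realized by an honest trace-free character. Here I would reconstruct a representation from the coordinate data. The rectangle relations express that any four of the $v_i$ are linearly dependent, so the whole configuration sits inside a quadratic space of dimension $\le 3$, and I can build vectors $v_1,\dots,v_n$ realizing the prescribed $x_{ij}=2\langle v_i,v_j\rangle$; the hexagon relations then fix the triple products $x_{ijk}$ consistently once an orientation of $V$ is chosen, since the diagonal case $x_{ijk}^2=\tfrac12\det\big((x_{\alpha\beta})_{\alpha,\beta\in\{i,j,k\}}\big)$ determines each $x_{ijk}$ up to the global sign that the orientation pins down. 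Translating the $v_i$ back into trace-free matrices $A_i\in\SL_2(\C)$ and invoking (F2), the fundamental relations force the identity $A_iA_jA_i^{-1}=A_k$ at every crossing, so $m_i\mapsto A_i$ respects all Wirtinger relators and defines a representation $\rho$ with $\chi_\rho\in\mathfrak{S}_0(K)$ mapping to the given point.

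I expect the main obstacle to be exactly this realization step, namely upgrading equality of traces to equality of matrices. Showing that (H) and (R) are not merely necessary but sufficient requires verifying that the reconstructed $A_i$ genuinely satisfy the conjugation relations $A_k=A_iA_jA_i^{-1}$ rather than just carrying the correct pairwise traces, and that they land in $\SL_2(\C)$ with determinant $1$ rather than in some rescaled trace-free space. The delicate cases are the degenerate strata where the $v_i$ span a space of dimension $<3$, where some triple products vanish, or where the Gram form degenerates, since there the trace coordinates no longer determine the representation uniquely and reducible characters must be treated separately. This is where the case analysis concentrates, and I would model it on the reconstruction underlying Theorems 3.1 and 3.2 of \cite{Gonzalez-Montesinos}, adapting it to the meridionally trace-free setting.
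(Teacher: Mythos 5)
Note first that the paper does not actually prove this theorem: it is imported from \cite{Nagasato4,Nagasato5}, with the underlying mechanism credited to Theorems 3.1 and 3.2 of \cite{Gonzalez-Montesinos}. Measured against that cited line of argument, your dictionary is exactly right, and your necessity half is complete and correct: $A_i^2=-I$, the anticommutation $A_iA_j+A_jA_i=\tr(A_iA_j)I$, the computation $A_iA_jA_i^{-1}=x_{ij}A_i-A_j$ giving (F2), the Gram-determinant identity for products of triple products giving (H) with its factor $\tfrac12$ (one can check the diagonal case reproduces the Fricke identity $x_{ijk}^2=4-x_{ij}^2-x_{ik}^2-x_{jk}^2+x_{ij}x_{ik}x_{jk}$), and singularity of $4\times 4$ Gram matrices giving (R). Your sign convention $x_{ijk}=+\tr(A_iA_jA_k)$, versus the paper's $x_{ijk}=-t_{m_im_jm_k}$, is harmless: (F2) and (R) do not involve the triple coordinates and (H) is quadratic in them, and the simultaneous sign flip is realized on characters by twisting $\rho$ with the sign character of the abelianization.

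The genuine gap is in the converse, at the step you state as ``the rectangle relations express that any four of the $v_i$ are linearly dependent.'' That is not what (R) says: the theorem imposes the vanishing of Gram determinants only for the quadruples $\{1,2,a,b\}$ with $3\le a<b\le n$, not for arbitrary $\{a,b,c,d\}$; and in any case, over $\C$ the vanishing of a Gram determinant does not imply linear dependence (a nonzero isotropic vector already has singular $1\times 1$ Gram matrix). So the key input for your reconstruction --- that the symmetric matrix $(x_{ij})$ has rank at most $3$, hence is the Gram matrix of vectors $v_1,\dots,v_n$ in a $3$-dimensional quadratic space --- does not follow from (R) alone. It has to be extracted from the interplay of the restricted family (R) with (F2), which lets one express every $x_{ij}$ polynomially in coordinates with small indices (this is precisely the elimination the paper performs for $F_2(T_{4,5})$, and is why the bordered family $\{1,2,a,b\}$ suffices), together with a stratified analysis of the degenerate loci: when the $v_a$ span nondegenerately, (F2) does give $\langle v_a,\,v_k-(x_{ij}v_i-v_j)\rangle=0$ for all $a$ and hence the vector identity $v_k=x_{ij}v_i-v_j$, which upgrades to the matrix relation $A_k=A_iA_jA_i^{-1}$; but on the reducible/degenerate strata this deduction fails and the characters must be realized by a separate argument. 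You correctly identify this as the hard part, but deferring it wholesale to ``adapt \cite{Gonzalez-Montesinos}'' leaves the substantial content of the sufficiency direction unproven; as written, the converse is a sound plan resting on an overstated reading of (R), not yet a proof.
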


The coordinates $x_{ij}$ and $x_{ijk}$ correspond to 
$-t_{m_im_j}(\chi_{\rho})$ and $-t_{m_im_jm_k}(\chi_{\rho})$, respectively. 
The fundamental relation (F2) is symmetric in the indices $j$ and $k$, 
which correspond to the underarcs, since the fundamental relations (F2) with $a=i$ 
for a Wirtinger triple $(i,j,k)$ implies $x_{ik}=x_{ij}$. 


\subsection{Ghost characters of a knot}\label{subsec_ghost}
Suppose that 
\[
G(K)=\langle m_1,\cdots,m_n \mid r_1,\cdots,r_{n-1} \rangle. 
\]
is a Wirtinger presentation of the knot group $G(K)$. 
We define the fundamental variety $F_2(K) \subset \C^{n \choose 2}$ 
as the set of common solutions to the fundamental relations (F2): 
\[
F_2(K)=\left\{(x_{12},\cdots,x_{n-1,n}) \in \C^{n \choose 2}\ \left|\ 
\begin{array}{c}
x_{ak}=x_{ij}x_{ai}-x_{aj} \mbox{ for any $1 \leq a \leq n$}\\
\mbox{and any Wirtinger triple $(i,j,k)$}
\end{array}
\right.\right\}. 
\]
In general, for any closed algebraic set $V \subset \C^N$, 
the set of regular functions on $V$ forms a ring, 
called the coordinate ring of $V$, and denoted by $\mathbf{C}[V]$. 
It is known that $\mathbf{C}[V]$ is isomorphic to the quotient 
of the polynomial ring $\C[z_1,\cdots,z_n]$ by the ideal of polynomials 
vanishing on $V$.   
In particular, the coordinate ring $\mathbf{C}[F_2(K)]$ of the fundamental 
variety $F_2(K)$ is isomorphic to the nilradical quotient of the complexified 
degree $0$ abelian knot contact homology:
\[
(HC_0^{ab}(K) \otimes \C)/\sqrt{0} \cong \mathbf{C}[F_2(K)]. 
\]
In the current paper, we take this correspondence as the definition of $HC_0^{ab}(K)$, 
rather than using the orignal definition.  
For details on this relationship\footnote{This was originally discovered 
in \cite{Nagasato3}.}, see Proposition 4.2 in \cite {Nagasato5}. 
Thus, $F_2(K)$ is a knot invariant up to biregular equivalence, 
which justifies the notation $F_2(K)$.

Some observations suggest that for any knot $K$ with at most 10 crossings, 
every point of $F_2(K)$ lifts to $S_0(K)$ under the relation (GH). 
However, this property does not hold in general. 

\begin{definition}[Ghost characters of a knot \cite{Nagasato4,Nagasato5}]
A point $(x_{ij}) \in F_2(K)$ that does not satisfy one of (GH) is called 
a ghost character of $K$. 
\end{definition}

As shown in \cite[Theorem 4.8]{Nagasato5}, 
knots with bridge index less than 4 do not admit ghost characters. 
Computer calculations suggest that the $(4,q)$-torus knot $T_{4,q}$ 
with odd $q \geq 5$, 
which have bridge index $4$, appear to admit ghost characters. 
In the next subsection, we explicitely compute 
the ghost character for the knot $T_{4,5}$. 


\subsection{Ghost character of the (4,5)-torus knot}
Let $D$ be the diagram of the knot $T_{4,5}$, shown in Figure \ref{diagram_T45}. 
Assign meridians $m_1,\cdots,m_{15}$ as indicated in Figure \ref{diagram_T45}  
for the Wirtinger presentation of $G(T_{4,5})$ associated with $D$. 
\begin{figure}[htbp]
\[
D=\begin{minipage}{11cm}
\begin{overpic}[width=0.95\linewidth]{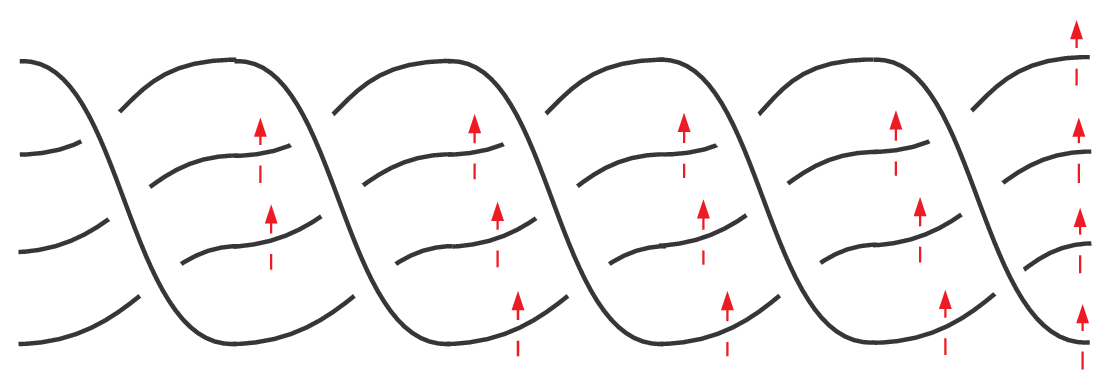}
\put(99,5){$x_1$}
\put(99,14){$x_2$}
\put(99,22.5){$x_3$}
\put(99,31.5){$x_4$}
\put(80,7){$x_5$}
\put(77.5,16){$x_6$}
\put(75,24){$x_7$}
\put(60,7){$x_8$}
\put(58,16){$x_9$}
\put(54.5,24){$x_{10}$}
\put(40,7){$x_{11}$}
\put(38,16){$x_{12}$}
\put(35,24){$x_{13}$}
\put(17,16){$x_{14}$}
\put(16,24){$x_{15}$}
\end{overpic}
\end{minipage}
\]
\caption{Diagram $D$ of $T_{4,5}$ and meridians $m_1,\cdots,m_{15}$. 
The four parallel curves connecting the both sides of the diagram are omitted.}
\label{diagram_T45}
\end{figure}

Under this setup, the fundamental variety $F_2(T_{4,5})$ is described by 
\[
F_2(T_{4,5})=\left\{(x_{12},\cdots,x_{14,15}) \in \C^{15 \choose 2}\ \left|\ 
\begin{array}{c}
x_{ak}=x_{ij}x_{ai}-x_{aj} \mbox{ for all $1 \leq a \leq 15$}\\
\mbox{and all Wirtinger triple $(i,j,k)$}
\end{array}
\right.\right\}
\]
As shown in \cite{Nagasato6}, a knot $K$ in braid position generally admits 
a systematic elimination process for the fundamental relations (F2). 
We apply this method to the current case of $T_{4,5}$, 
following the approach in \cite{Nagasato6}. 
First, for $1 \leq a \leq 15$, we have the following fundamental relations: 
\[
\begin{array}{ll}
x_{a15}=x_{8,11}x_{a11}-x_{a8}, & x_{a14}=x_{11,13}x_{a11}-x_{a13},\\
x_{a13}=x_{5,8}x_{a8}-x_{a5}, & x_{a12}=x_{8,10}x_{a8}-x_{a10},\\
x_{a11}=x_{8,9}x_{a8}-x_{a9}, & x_{a10}=x_{1,5}x_{a5}-x_{a1},\\
x_{a9}=x_{5,7}x_{a5}-x_{a7},  & x_{a8}=x_{5,6}x_{a5}-x_{a6},\\
x_{a7}=x_{1,4}x_{a1}-x_{a4}, & x_{a6}=x_{1,3}x_{a1}-x_{a3},\\
x_{a5}=x_{1,2}x_{a1}-x_{a2}, & \\
x_{a4}=x_{11,12}x_{a11}-x_{a12}, & x_{a3}=x_{4,11}x_{a4}-x_{a11},\\
x_{a2}=x_{4,15}x_{a4}-x_{a15}, & x_{a1}=x_{4,14}x_{a4}-x_{a14}.
\end{array}
\]
Note that the last four types of (F2) are written for triples $(i,j,k)$ with $j>k$,  
utilizing the symmetry between $j$ and $k$, 
for technical reasons related to the elimination process of (F2). 
Although this system can be solved by computer, 
we demonstrate how the elimination can be carried out mostly manually, 
following the strategy in \cite{Nagasato6}.  

To facilitate the process, we introduce the following additional relations derived from (F2): 
\[
\begin{array}{ll}
\fbox{$x_{12}$} &=x_{4,14}x_{24}-x_{2,14}
=x_{4,14}x_{4,15}-(x_{4,15}x_{4,14}-x_{14,15})=\fbox{$x_{14,15}$},\\
\fbox{$x_{13}$} &=x_{4,11}x_{14}-x_{1,11}=x_{4,11}x_{4,14}-(x_{4,14}x_{4,11}-x_{11,14})
=\fbox{$x_{11,14}$},\\
\fbox{$x_{14}$} &=x_{4,14}=\fbox{$x_{11,12}x_{11,14}-x_{12,14}$},\\
\fbox{$x_{23}$} &=x_{4,11}x_{24}-x_{2,11}=x_{4,11}x_{4,15}-(x_{4,15}x_{4,11}-x_{11,15})
=\fbox{$x_{11,15}$},\\
\fbox{$x_{24}$} &=x_{4,15}=\fbox{$x_{11,12}x_{11,15}-x_{12,15}$},\\
\fbox{$x_{34}$} &=x_{4,11}=\fbox{$x_{11,12}$}.
\end{array}
\]
We begin the elimination process by successively eliminating $x_{a15}$ for $1 \leq a \leq 14$, 
using $x_{a15}=x_{8,11}x_{a11}-x_{a8}$.  
Next, we eliminate $x_{a14}$ for $1 \leq a \leq 13$, from both the original (F2) 
and the relations obtained in the previous step, using $x_{a14}=x_{11,13}x_{a11}-x_{a13}$. 
We continue this procedure down to $x_{a5}$. 
At the conclusion of this process, $x_{a15},\cdots,x_{a5}$ for $1 \leq a \leq 15$ 
are expressed as elements in the polynomial ring 
\[
R=\C[x_{12},x_{13},x_{14},x_{23},x_{24},x_{34}].
\] 

To carry out the above elimination process manually, 
we utilize the topological interpretation of (F2), 
as discussed in the proof of Theorem 4.8 in \cite{Nagasato5} 
(see also \cite[Section 2]{Nagasato6}). 
Specifically, (F2) can be interpreted as sliding a corresponding loop 
across a crossing in the diagram $D$ 
(from left to right in the case of the current diagram), 
and resolving the winding by the trace-free Kauffman bracket skein relation: 
\[
\begin{minipage}{10cm}
\begin{overpic}[width=0.95\linewidth]{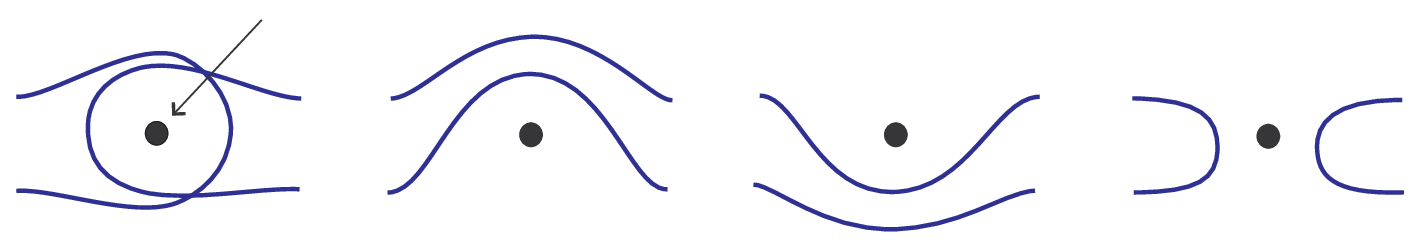}
\put(22.5,7){$=$}
\put(49.5,7){$+$}
\put(75,7){$+$}
\put(19.5,15){\small knot}
\end{overpic}
\end{minipage}.
\]
More precisely, we first identify $x_{ak}$ with a loop freely homotopic to $m_am_k$. 
We regard this loop as the union $c_a \cup c_k$, where $c_a$ and $c_k$ are 
the subarcs of the loop corresponding to $m_a$ and $m_k$, respectively. 
Next, we slide $c_k$ across the crossing, along 
the $k$th arc of $D$, while keeping $c_a$ and the endpoints of $c_k$ fixed. 
We then resolve the winding part (the part of the resulting loop 
that passes under the $i$th arc of $D$) 
using the trace-free Kauffman bracket skein relation: 
\[
\begin{minipage}{4cm}
\begin{overpic}[width=0.95\linewidth]{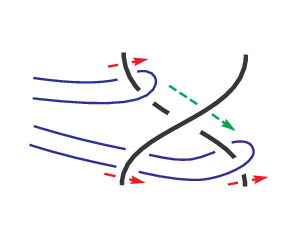}
\put(10,65){$s_{ak}$}
\put(40,10){$i$}
\put(80,10){$j$}
\put(40,70){$k$}
\put(85,48){\small sliding}
{}\end{overpic}
\end{minipage}
=
\begin{minipage}{3.5cm}
\begin{overpic}[width=0.95\linewidth]{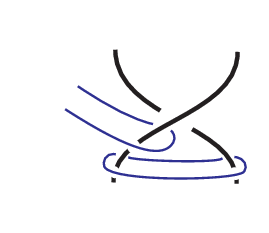}
\put(10,55){$s_{ai}$}
\put(40,8){$i$}
\put(85,8){$j$}
\put(38,72){$k$}
\end{overpic}
\end{minipage}
-
\begin{minipage}{3.5cm}
\begin{overpic}[width=0.95\linewidth]{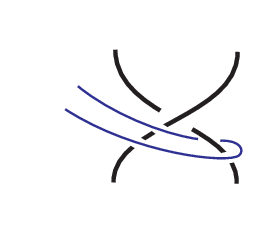}
\put(10,55){$s_{aj}$}
\put(40,8){$i$}
\put(85,8){$j$}
\put(38,72){$k$}
\end{overpic}
\end{minipage}.
\]
Each of the resulting loops is freely homotopic to $m_am_i$, $m_am_j$ or $m_im_j$. 
By interpreting the disjoint union of loops as the product of their associated monomials, 
we recover the relation (F2): $x_{ak}=x_{ij}x_{ai}-x_{aj}$. (This operation can be realized 
via the Kauffman brancket skein algebra. See \cite{Nagasato5}.)
We continue this topological procedure until the given loop is expressed 
as a polynomial in the ring $R$. 

As explained in the proof of \cite[Theorem 4.8]{Nagasato5}, the resulting polynomial 
can also be obtained by first sliding the subarc all the way to the right side of $D$, 
and then resolving the winding parts {\it along the way according to} (F2). 
(For details on this process, see the proof of Theorem 4.8 in \cite{Nagasato5}.)   
We will adopt this perspective in the following arguments as well. 

We now describe the topological elimination process case by case: 
\begin{enumerate}
\item For $s_{ij}$ with $1 \leq i \leq j \leq 4$, 
place the subarc $c_i$ of $s_{ij}$ (resp. $c_j$) on the left side of $D$, 
and slide it all the way to the right side of $D$, while keeping the subarc $c_j$ (resp. $c_i$) 
and the endpoints of $c_i$ fixed. 
Then resolve the winding parts of the resulting loop along the way according to (F2).  
The resulting expression, after substituting $s_{ij}=x_{ij}$, provides a polynomial 
in $R$, denoted by $g_i(x_{ij})$ (resp. $g_j(x_{ij})$)). 

\item For $s_{ij}$ $(1 \leq i < j \leq 4)$ in the additional relations, 
place a loop freely homotopic to $m_im_j$ on the left side of $D$, 
and slide it all the way to the right side of $D$.  
The resulting loop does not have winding parts in the case of $D$. 
 
\item For the other $s_{ij}$ $(i<j)$, 
place a loop freely homotopic to $m_im_j$ in $D$, 
and slide it all the way to the right side of $D$. 
Then resolve the winding parts of the resulting loop along the way acoording to (F2). 
\end{enumerate}
Process (1) yields the relations $x_{ij}=g_i(x_{ij})$ and $x_{ij}=g_j(x_{ij})$ 
for all $1 \leq i < j \leq 4$. 
Process (2) shows that $x_{12}=x_{23}=x_{34}=x_{45}$ and 
$x_{13}=x_{24}$. 
Process (3) provides expressions for $x_{ij}$ (with $5 \leq i$ or $5 \leq j$) 
as polynomials in $R$. 
Note that in Processes (2) and (3), $x_{ii}$ for $1 \leq i \leq 15$ are omitted, 
as they yield only the trivial relation $x_{ii}=2$. 
(The underlying idea originates in \cite{Nagasato3}; 
see also a similar approach in \cite{Ng}.) 

By the above argument, we define a biregular map (an isomorphism onto its image) 
$i: F_2(T_{4,5}) \to \mathrm{Im}(i) \subset \C^{4 \choose 2}$, 
\[
(x_{12},\cdots,x_{14,15}) \mapsto (x_{12},x_{13},x_{14},x_{23},x_{24},x_{34}). 
\]
The resulting equations $x_{ij}=g_i(x_{ij})$ and $x_{ij}=g_j(x_{ij})$ from Process (1) 
become the defining polynomials of $\mathrm{Im}(i)$. Hence, we obtain 
\[
F_2(T_{4,5}) \cong \left\{(x_{12},\cdots,x_{34})\in \C^{4 \choose 2}\ \left|\ 
\begin{array}{l}
\mbox{$x_{ij}=g_{i}(x_{ij})$, $x_{ij}=g_{j}(x_{ij})$ $(1 \leq i \leq j \leq 4)$}\\
\mbox{$x_{12}=x_{23}=x_{34}=x_{14}$, $x_{13}=x_{24}$}
\end{array}\right.\right\}. 
\]

We now can eliminate $x_{14},x_{23},x_{24}$ and $x_{34}$ using the relations 
$x_{12}=x_{23}=x_{34}=x_{14}$ and $x_{13}=x_{24}$. 
The topological process above shows that substituting 
\[
x_{12}=x_{23},\ x_{23}=x_{34},\ x_{34}=x_{14},\ x_{14}=x_{12},\ x_{13}=x_{24},\ x_{24}=x_{13}
\]
into the relations $x_{ij}=g_{i}(x_{ij})$ and $x_{ij}=g_{j}(x_{ij})$ yields 
$x_{i+1,j+1}=g_{i+1}(x_{i+1,j+1})$ and $x_{i+1,j+1}=g_{j+1}(x_{i+1,j+1})$, 
where the indices are taken cyclically from $1$ to $4$, that is, 
if $i$ (resp. $j$) is $4$, then $i+1=1$ (resp. $j+1=1$). 
Hence, the relations $x_{ij}=g_{i}(x_{ij})$, $x_{ij}=g_{j}(x_{ij})$ 
for $(i,j)=(1,4),(2,3),(3,4)$ are reduced to $x_{12}=g_{1}(x_{12})$, $x_{12}=g_{2}(x_{12})$, 
and those for $(i,j)=(2,4)$ are reduced to 
$x_{13}=g_{1}(x_{13})$, $x_{13}=g_{3}(x_{13})$ under this substitution. 
Furthermore, the relation $x_{ii}=g_{i}(x_{ii})$ for $2 \leq i \leq 4$ is reduced to 
$x_{11}=g_{1}(x_{11})$. Therefore, $F_2(T_{4,5})$ is isomorphic to 
the following algebraic set: 
\[
F_2(T_{4,5}) \cong \left\{(x_{12},x_{13}) \in \C^2\ \left|\ 
\begin{array}{l}
x_{1j}=\widetilde{g_{1}}(x_{1j}),\ x_{1j}=\widetilde{g_j}(x_{1j})\ (2 \leq j \leq 3)\\
x_{11}=\widetilde{g_1}(x_{11})
\end{array}\right.\right\}, 
\]
where $\widetilde{g_{i}}(x_{ij})$ (resp. $\widetilde{g_j}(x_{ij})$) denotes the polynomial 
obtained by substituting 
\[
x_{14}=x_{12},\ x_{23}=x_{12},\ x_{24}=x_{13},\ x_{34}=x_{12}
\]
into $g_{i}(x_{ij})$ (resp. $g_j(x_{ij})$). 
Consequently, we obtain the following explicit expressions for 
the defining polynomials of $F_2(T_{4,5})$. 
Let $a=x_{12}$, $b=x_{13}$. Then we have: 
\begin{eqnarray*}
x_{11}=2=\widetilde{g_{1}}(x_{11}) &=& a^5-4a^3b+3a^3+3ab^2-2ab-3a,\\
a=\widetilde{g_{1}}(x_{12}) &=& a^6-4a^4b+2a^4+3a^2b^2+a^2b-5a^2-b^2+2,\\
a=\widetilde{g_{2}}(x_{12}) &=& a^4b-a^4-3a^2b^2+4a^2b+b^3-3b,\\
b=\widetilde{g_{1}}(x_{13}) &=& a^5b-a^5-4a^3b^2+6a^3b+3ab^3-a^3-3ab^2-5ab+3a,\\
b=\widetilde{g_{3}}(x_{13}) &=& a^5-3a^3b+a^3+ab^2+2ab-3a. 
\end{eqnarray*}
Solving these system of equations, we obtain six points $(x_{12},x_{13})=(2,2)$, 
$(-1,1)$, $(Root(z^2-3z+1),-2+2Root(z^2-3z+1))$, $(Root(z^2+z-1),2)$. 
These points constitute the fundamental variety $F_2(T_{4,5})$. 

We now focus on the point $(x_{12},x_{13})=(-1,1) \in F_2(T_{4,5})$. 
From this, we obtain   
\[
(x_{12},x_{13},x_{14},x_{23},x_{24},x_{34})=(-1,1,-1,-1,1,-1). 
\]
In fact, this point does not satisfy the relation (R): 
\[
\left|
\begin{array}{cccc}
2 & x_{12} & x_{13} & x_{14}\\
x_{21} & 2 & x_{23} & x_{24}\\
x_{31} & x_{32} & 2 & x_{34}\\
x_{41} & x_{42} & x_{43} & 2
\end{array}
\right|=0.
\]
We note that any point satisfying (GH) also satisfies the relations (R). 
(See Proposition 3.1 in \cite{Nagasato5}.)  
Therefore, the point $(x_{12},x_{13})=(-1,1)$ does not satisfies at least one of (GH), 
and thus it is a ghost character of $T_{4,5}$. 
It can be verified that all other points in $F_2(T_{4,5})$ satisfy (GH). 
Hence, we obtain the following result.  
\begin{proposition}
The $(4,5)$-torus knot $T_{4,5}$ admits exactly one ghost character
given by $(x_{12},x_{13})=(-1,1) \in F_2(T_{4,5})$. 
\end{proposition}
This proposition, combined with Theorem 4.10 in \cite{Nagasato5}, 
shows that the knot $T_{4,5}$ provides a counterexample 
to Ng's conjecture (see \cite[Conjectures 4.3, 4.4]{Nagasato5}), 
since the map $h^*$ is either not surjective or not injective. 
We will examine this result in more detail in the following sections. 


\section{Proof of Theorem \ref{thm_main}}
We now prove Theorem \ref{thm_main} using ghost characters. 
To begin, we review the polynomial map $\widehat{\Phi}:S_0(K) \to X(\Sigma_2K)$ 
in a general setting.

\subsection{The map $\widehat{\Phi}$}
We start with briefly reviewing the map 
$\widehat{\Phi}:\mathfrak{S}_0(K) \to \mathfrak{X}(\Sigma_2K)$ 
as introduced in \cite{Nagasato-Yamaguchi}. 
Let $p: C_2K \to E_K$ be the 2-fold cyclic cover of the knot exterior $E_K$, 
$\mu_K$ a meridian of $K$, and $\mu_2$ a meridian of $C_2K$ 
whose image under $p$ is homotopic to $\mu_K^2$. 
In this setting, $\pi_1(\Sigma_2K)$ is isomorphic to 
$\pi_1(C_2K)/\l \mu_2 \r$, where $\l \mu_2 \r$ denotes the normal closure 
of the subgroup generated by $\mu_2$. 
The map $p$ induces a natural injection $p_*: \pi_1(C_2K) \to G(K)$, 
so $\pi_1(C_2K) \cong \Im(p_*)$. Since $p_*(\mu_2)=\mu_K^2$, 
it follows that $\pi_1(\Sigma_2K) \cong \Im(p_*)/\l \mu_K^2 \r$, 
Then the map $\widehat{\Phi}:\mathfrak{S}_0(K) \to \mathfrak{X}(\Sigma_2K)$ 
is defined as follows: for $\chi_{\rho} \in \mathfrak{S}_0(K)$ and $g \in \pi_1(\Sigma_2K)$, 
\begin{eqnarray}\label{map_phi}
\widehat{\Phi}(\chi_{\rho})(g)=(\sqrt{-1})^{\alpha(p_*(g))}\chi_{\rho}(p_*(g)), 
\end{eqnarray}
where $\alpha: G(K) \to H_1(E_K)=\langle \mu_K \rangle \cong \Z$ denotes 
the abelianization map. 
(For further details, see \cite{Nagasato-Yamaguchi}.)

Now, following \cite[Section 4]{Nagasato5}, 
we describe the map $\widehat{\Phi}$ as a polynomial map 
$\widehat{\Phi}: S_0(K) \to X(\Sigma_2K)$, using the following theorem. 

\begin{theorem}[\cite{Fox}, cf. \cite{Kinoshita}]\label{thm_Fox}
For the knot group $G(K)=\langle m_1,\cdots,m_n \mid r_1,\cdots,r_{n-1} \rangle$ 
generated by $n$ meridians\footnote{This presentation 
is not necessarily a Wirtinger presentation.}  $m_1,\cdots, m_n$, we have 
\[
\pi_1(\Sigma_2K) \cong \langle m_1m_i\ (1 \leq i \leq n) \mid w(r_j),w(m_1r_jm_1^{-1})\ 
(1 \leq j \leq n-1),  m_i^2\ (1 \leq i \leq n)\rangle,
\] 
where $w(r_j)$ (resp. $w(m_1r_jm_1^{-1})$) denotes the word obtained 
by rewriting $r_j$ (resp. $m_1r_jm_1^{-1}$) in terms of the generators $m_1m_i$. 
\end{theorem}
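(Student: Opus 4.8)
The plan is to obtain the presentation by realizing $\pi_1(\Sigma_2K)$ as a quotient of an index-two subgroup of $G(K)$ and running the Reidemeister--Schreier rewriting process. Let $\alpha:G(K)\to\Z$ be the abelianization (each meridian $m_i\mapsto 1$) and let $\bar\alpha:G(K)\to\Z/2\Z$ be its mod-$2$ reduction. The connected double cover $C_2K\to E_K$ corresponds to the index-two subgroup $\Ker(\bar\alpha)$, and by the identification recalled in the excerpt, $\Im(p_*)=\Ker(\bar\alpha)$ and $\pi_1(\Sigma_2K)\cong\Im(p_*)/\langle\!\langle m_1^2\rangle\!\rangle$. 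Thus the whole statement reduces to presenting $\Ker(\bar\alpha)$ and then adjoining the branch relation $m_1^2=1$ coming from the lifted meridian.

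First I would take the Schreier transversal $\{1,m_1\}$, which is legitimate because $\bar\alpha(m_1)$ generates $\Z/2\Z$. The Reidemeister--Schreier process then produces two families of generators, $\gamma(m_1,m_i)=m_1m_i$ and $\gamma(1,m_i)=m_im_1^{-1}$ $(1\le i\le n)$, together with one family of relators for each transversal element: the rewrite of $r_j$ (the case of $1$) and the rewrite of $m_1r_jm_1^{-1}$ (the case of $m_1$). Since every Wirtinger relator has exponent sum $0$ it already lies in $\Ker(\bar\alpha)$, so these rewrites are well defined, and under the Reidemeister--Schreier rewriting function they are exactly the words $w(r_j)$ and $w(m_1r_jm_1^{-1})$ appearing in the statement. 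What remains is to cut the generating set down to the single family $m_1m_i$.

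The key structural tool is the deck involution $\tau$, namely conjugation by $m_1$, which is an automorphism of $\Ker(\bar\alpha)$ and, after imposing $m_1^2=1$, satisfies $\tau(m_1m_i)=m_1^2m_im_1^{-1}=m_im_1^{-1}$; hence $\tau$ interchanges the two generator families and carries $w(r_j)$ to $w(m_1r_jm_1^{-1})$. I would then argue that the relations in the first family allow one to solve, crossing by crossing, for the ``second-sheet'' generator $m_cm_1^{-1}$ attached to the under-output arc of each crossing in terms of generators already determined; starting from the trivial generator $m_1m_1^{-1}=1$ and propagating around the connected diagram expresses every $m_im_1^{-1}$ as a word in the $m_1m_i$. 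Substituting these expressions back and imposing $m_1^2=1$ then leaves a presentation on the generators $m_1m_i$ alone, whose defining relations are the rewrites of the $r_j$ and of the $m_1r_jm_1^{-1}$.

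I expect the main obstacle to be the bookkeeping of the rewriting function together with verifying that this elimination is globally consistent. One must check that propagating the Wirtinger relations around the diagram assigns to each $m_im_1^{-1}$ a word in the $m_1m_i$ that is independent of the chosen path, and that the branch relation $m_1^2=1$ (equivalently the triviality of the generator $m_1m_1$) is correctly accounted for, so that the generator and relator counts and the organization into the two stated families come out exactly as claimed. Tracking how the involution $\tau$ interlocks the two relator families is precisely the delicate point, and is where I would spend the most care.
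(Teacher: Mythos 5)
Your skeleton---Reidemeister--Schreier over the transversal $\{1,m_1\}$ to present $\Im(p_*)=\Ker(\bar{\alpha})$ on the two generator families $m_1m_i$, $m_im_1^{-1}$ with relators $w(r_j)$, $w(m_1r_jm_1^{-1})$, followed by the quotient by $\l m_1^2\r$---is exactly the paper's route (its coset decomposition $G(K)=\Im(p_*)\cup\Im(p_*)m_1$ is your transversal in disguise). The gap is in your final step. What eliminates the second generator family is not a crossing-by-crossing propagation but the observation that in the quotient \emph{every} meridian becomes an involution: all $m_i$ are conjugate in a knot group, say $m_i=gm_1g^{-1}$, and since $m_1(m_1^2)m_1^{-1}=m_1^2$, every $G(K)$-conjugate of $m_1^2$ already lies in the normal closure of $m_1^2$ taken inside the index-two subgroup; hence $m_i^2=1$ in $\Im(p_*)/\l m_1^2\r$ for all $i$, and therefore $m_im_1^{-1}=(m_1m_i)^{-1}$. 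This collapses the second family at a stroke, and it is also what makes the statement itself meaningful: the Reidemeister--Schreier rewrite of an even word in the meridians, e.g.\ $r_j=m_km_im_j^{-1}m_i^{-1}\mapsto(m_km_1^{-1})(m_1m_i)(m_1m_j)^{-1}(m_im_1^{-1})^{-1}$, unavoidably involves both families, and only modulo $m_i^2=1$ does each pair $m_a^{\pm1}m_b^{\pm1}$ become $(m_1m_a)^{-1}(m_1m_b)$, so that ``interpreting $r_j$ with the generators $m_1m_i$'' is well defined. You impose only $m_1^2=1$ and never derive $m_i^2=1$, so your claim that the rewrites are words in the $m_1m_i$ alone is unjustified as written.

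Moreover, the propagation scheme, even where it succeeds, does not prove the theorem as stated. Solving $w(r_j)$ for $m_km_1^{-1}=(m_im_1^{-1})(m_1m_j)(m_1m_i)^{-1}$ arc by arc is a sequence of Tietze eliminations in which each of the $n-1$ relators $w(r_j)$ is \emph{consumed} to eliminate one generator $m_im_1^{-1}$; you end with a presentation on the $m_1m_i$ whose relators are only the $n-1$ substituted words coming from $w(m_1r_jm_1^{-1})$---a presentation of the correct group, but not the stated presentation carrying both relator families, since your propagated expressions for $m_im_1^{-1}$ agree with $(m_1m_i)^{-1}$ only as elements of the quotient, not as words. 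Incidentally, the ``delicate point'' you flag, path-independence of the propagation, is a non-issue: Tietze eliminations need no global consistency check, as different elimination orders merely give Tietze-equivalent presentations. The genuinely needed ingredient is the involution property above, which your write-up omits.
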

This theorem is based on a result of Fox\footnote{Fox's theorem 
can be applied to a presentation of a knot group generated by meridians.} in \cite{Fox},
which states that for the presentation of $G(K)$ in Theorem \ref{thm_Fox}, 
and for the injection $p_*: \pi_1(C_2K) \to G(K)$ satisfying $p_*(\mu_2)=m_1^2$, 
we obtain  
\[
\Im(p_*) \cong \langle m_1m_i, m_im_1^{-1} (1 \leq i \leq n) 
\mid w(r_j),w(m_1r_jm_1^{-1})\ (1 \leq j \leq n-1)\rangle 
\]
via the coset decomposition using a Schreier system $\{1,m_1\}$ 
(cf. \cite{Kinoshita}, \cite[Theorem 4.5]{Nagasato5}). 
This presentation gives a concrete description of the character variety $X(\Sigma_2K)$ 
of $\pi_1(\Sigma_2K)$ via the trace function $t$ introduced in Section \ref{intro}. 
For simplicity, we define the following: 
\begin{eqnarray*}
y_{a}(\chi_{\rho_*}) &:=&t_{m_1m_a}(\chi_{\rho_*}),\\
y_{ab}(\chi_{\rho_*}) &:=&t_{(m_1m_a)(m_1m_b)}(\chi_{\rho_*}),\\
y_{abc}(\chi_{\rho_*}) &:=&t_{(m_1m_a)(m_1m_b)(m_1m_c)}(\chi_{\rho_*}),
\end{eqnarray*}
where $\rho_*:\pi_1(\Sigma_2K) \to \SL_2(\C)$ denotes an unspecified representation. 
Then, as discussed in Section \ref{sec_2}, it follows from \cite{Gonzalez-Montesinos} that 
\[
X(\Sigma_2K)=\left\{\left(y_{a}(\chi_{\rho_*}); y_{bc}(\chi_{\rho_*}); y_{def}(\chi_{\rho_*})\right) 
\in \C^{n-1+{n-1 \choose 2}+{n-1 \choose 3}}\ 
\left|\ 
\begin{array}{l}
\chi_{\rho_*} \in \mathfrak{X}(\Sigma_2K)\\
2 \leq a \leq n\\
2 \leq b < c \leq n \\
2 \leq d < e < f \leq n
\end{array}
\right.\right\}. 
\]
By setting 
\[
z_{ab}(\chi_{\rho_*}):=t_{m_am_b}(\chi_{\rho_*})=t_{(m_1m_a)^{-1}(m_1m_b)}(\chi_{\rho_*})
=y_a(\chi_{\rho_*})y_b(\chi_{\rho_*})-y_{ab}(\chi_{\rho_*}),
\] 
for $\chi_{\rho_*} \in \mathfrak{X}(\Sigma_2K)$, we can convert this parametrization to 
\[
X(\Sigma_2K) \cong 
\left\{\left(z_{ab}(\chi_{\rho_*}); y_{def}(\chi_{\rho_*}) \right) \in \C^{{n \choose 2}+{n-1 \choose 3}}\ 
\left|\ 
\begin{array}{ll}
\chi_{\rho_*} \in \mathfrak{X}(\Sigma_2K), & 1 \leq a < b \leq n\\
& 2 \leq d < e < f \leq n
\end{array}
\right.\right\}. 
\]
Using this formulation together with (\ref{map_phi}), we can express the map $\widehat{\Phi}$ 
as the following polynomial map. For any trace-free character $\chi_{\rho}=(x_{ij};x_{ijk})\in S_0(K)$, 
we have 
\begin{eqnarray*}
\widehat{\Phi}((x_{ij};x_{ijk}))
&=&\left(t_{m_am_b}(\widehat{\Phi}(\chi_{\rho}));
t_{(m_1m_d)(m_1m_e)(m_1m_f)}(\widehat{\Phi}(\chi_{\rho}))
\right)
\label{poly_phi}\\
&=&\left(x_{ab}; x_{1d}x_{1e}x_{1f}-\frac{1}{2}(x_{1d}x_{ef}+x_{1e}x_{df}+x_{1f}x_{de})
\right),\nonumber
\end{eqnarray*}
since for any trace-free character $\chi_{\rho} \in S_0(K)$,  
the $\SL_2(\C)$-trace identity (alternatively, the skein relation) 
together with the trace-free condition yields: 
\begin{eqnarray*}
t_{m_a m_b}(\widehat{\Phi}(\chi_{\rho}))&=&t_{m_a m_b}(-\chi_{\rho})
=-t_{m_am_b}(\chi_{\rho})\\
t_{(m_1m_d)(m_1m_e)(m_1m_f)}(\widehat{\Phi}(\chi_{\rho}))
&=&-t_{(m_1m_d)(m_1m_e)(m_1m_f)}(\chi_{\rho})\\
&=&-t_{(m_1m_d)}(\chi_{\rho})t_{(m_1m_e)}(\chi_{\rho})t_{(m_1m_f)}(\chi_{\rho})\\
&&-\frac{1}{2}t_{(m_1m_d)}(\chi_{\rho})t_{(m_em_f)}(\chi_{\rho})\\
&&-\frac{1}{2}t_{(m_1m_e)}(\chi_{\rho})t_{(m_dm_f)}(\chi_{\rho})\\
&&-\frac{1}{2}t_{(m_1m_f)}(\chi_{\rho})t_{(m_dm_e)}(\chi_{\rho}). 
\end{eqnarray*}


\subsection{Ghost characters and surjectivity of $\widehat{\Phi}$}
In the previous work \cite{Nagasato5}, it was shown that a ghost character satisfying 
a certain condition (described below) provides an obstruction 
to the surjectivity of the map $\widehat{\Phi}$. 

\begin{theorem}[Theorem 4.9 (2) in \cite{Nagasato5}]\label{thm_nag}
Let $K$ be a knot with a Wirtinger presentation 
$G(K)=\langle m_1, \cdots,m_n \mid r_1, \cdots, r_n \rangle$. 
Then the map $\widehat{\Phi}$ is not surjective if there exists a representation 
$\rho_*: \Sigma_2(K) \to \SL_2(\C)$ such that  the point $(t_{m_im_j}(\chi_{\rho_*})) \in F_2(K)$
is a ghost character of $K$, where $m_im_j$ for $1 \leq  i < j \leq n$ generate 
$\pi_1(\Sigma_2K)$. 
\end{theorem}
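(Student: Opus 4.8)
The plan is to produce one explicit point of $X(\Sigma_2K)$ that is missed by $\widehat{\Phi}$; since $\rho$ is a genuine representation of $\pi_1(\Sigma_2K)$, its character $\chi_{\rho}$ is a point of $X(\Sigma_2K)$, and showing $\chi_{\rho} \notin \widehat{\Phi}(S_0(K))$ immediately yields non-surjectivity. The whole argument tracks only the first block of coordinates of $X(\Sigma_2K)$, namely $z_{ab}=t_{m_am_b}(\chi_{\rho})$ $(1 \le a < b \le n)$, and compares them with the coordinates of $F_2(K)$.

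First I would read off the effect of $\widehat{\Phi}$ on these coordinates from the polynomial description (\ref{poly_phi}): for a trace-free character $(x_{ij};x_{ijk}) \in S_0(K)$, the image $\widehat{\Phi}((x_{ij};x_{ijk}))$ has $z_{ab}$-coordinate equal to $x_{ab}$. Writing $\mathrm{pr}: X(\Sigma_2K) \to \C^{{n \choose 2}}$ for the projection onto the $z_{ab}$-coordinates, this says that $\mathrm{pr} \circ \widehat{\Phi}$ is nothing but the coordinate projection $(x_{ij};x_{ijk}) \mapsto (x_{ij})$ forgetting the triple-product coordinates. In particular $\mathrm{pr}(\widehat{\Phi}(S_0(K)))$ is exactly the image of $S_0(K)$ under this forgetful projection into $F_2(K)$.

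Next I would observe the key containment. By Theorem \ref{defpoly_S0K}, every point $(x_{ij};x_{ijk})$ of $S_0(K)$ satisfies (F2), (H) and (R); hence its $x_{ij}$-part lies in $F_2(K)$, satisfies (R), and---by virtue of the accompanying $x_{ijk}$---also satisfies (H). Thus every point of $\mathrm{pr}(\widehat{\Phi}(S_0(K)))$ is a point of $F_2(K)$ that lifts to $S_0(K)$, and is therefore \emph{not} a ghost character by definition. Applying this to $\rho$: the tuple $(t_{m_im_j}(\chi_{\rho})) = \mathrm{pr}(\chi_{\rho})$ is a ghost character by hypothesis, so it cannot lie in $\mathrm{pr}(\widehat{\Phi}(S_0(K)))$; consequently $\chi_{\rho} \notin \widehat{\Phi}(S_0(K))$, and $\widehat{\Phi}$ is not surjective.

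The step needing the most care is the implication that a point admitting a lift to $S_0(K)$ cannot be a ghost character. Here one must be precise about what it means for a point $(x_{ij})$ of $F_2(K)$, carrying no triple-product coordinates, to ``satisfy (H)'': it means that some choice of $x_{ijk}$ completes it to a solution of the hexagon relations, which is exactly the existence of a lift. With this reading the implication is a tautology, the only genuine input being Theorem \ref{defpoly_S0K}. The remaining hazard is pure bookkeeping: one must keep the sign convention straight so that the $z_{ab}$-coordinate of $\widehat{\Phi}((x_{ij};x_{ijk}))$ is $x_{ab}$ and not $-x_{ab}$---the two sign effects, the factor $(\sqrt{-1})^2$ in (\ref{map_phi}) and the convention $x_{ij}=-t_{m_im_j}$, cancelling precisely as recorded in (\ref{poly_phi}).
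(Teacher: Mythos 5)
Your proof is correct and follows essentially the same route as the paper: both use the polynomial description (\ref{poly_phi}) to see that composing $\widehat{\Phi}$ with the projection onto the $t_{m_am_b}$-coordinates is the forgetful map $S_0(K) \to F_2(K)$, whose image consists of non-ghost points by Theorem \ref{defpoly_S0K}, so the ghost hypothesis forces $\chi_{\rho} \notin \Im(\widehat{\Phi})$. The only cosmetic difference is that the paper additionally verifies (using $m_a^2=1$ in $\pi_1(\Sigma_2K)$ and the trace identity) that $(t_{m_im_j}(\chi_{\rho}))$ lies in $F_2(K)$ for every representation $\rho$, a fact your argument simply takes from the hypothesis.
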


We briefly review the proof of Theorem \ref{thm_nag}. (For details, see \cite{Nagasato5}.)

\begin{proof}
We first note that for any representation $\rho_*: \Sigma_2(K) \to \SL_2(\C)$, 
the tuple $(t_{m_im_j}(\chi_{\rho_*}))$ defines a point in $F_2(K)$. 
Indeed, for any Wirtinger triple $(i,j,k)$ and any $1 \leq a \leq n$, the relations 
$m_am_k=(m_am_i)(m_jm_i)$ and $m_a^2=1$ in $\pi_1(\Sigma_2K) \cong \Im(p_*)/\l m_1^2 \r$, 
imply that 
\[
t_{m_am_k}(\chi_{\rho_*})=t_{m_im_j}(\chi_{\rho_*})t_{m_am_i}(\chi_{\rho_*})-t_{m_am_j}(\chi_{\rho_*}), 
\]
for any character $\chi_{\rho_*} \in X(\Sigma_2K)$. 
This shows that $t_{m_im_j}(\rho_*)$ for $1 \leq i<j \leq n$ satisfy the fundamental relations (F2) 
for $G(K)$, and thus $(t_{m_im_j}(\chi_{\rho_*}))$ defines a point of $F_2(K)$ 
for any representation $\rho_*: \Sigma_2(K) \to \SL_2(\C)$. 
This naturally induces a map $h^*: X(\Sigma_2K) \to F_2(K)$. 
(For more details on $h^*$, see \cite[Subsection 4.3]{Nagasato5}. 
The dual map was introduced in \cite{Ng2}.) 

Now, suppose that there exists a representation 
$\rho_*: \pi_1(\Sigma_2(K)) \to \SL_2(\C)$ such that 
the point $h^*(\chi_{\rho_*})=(t_{m_im_j}(\chi_{\rho_*})) \in F_2(K)$ is a ghost character of $K$. 
Then $\chi_{\rho_*}$ must lie outside $\Im(\widehat{\Phi})$.  
Otherwise, the preimage $\widehat{\Phi}^{-1}(\chi_{\rho_*})$ is not empty and 
its projection to $F_2(K)$, which is identified with $(t_{m_im_j}(\chi_{\rho_*}))$ by (\ref{poly_phi}), 
is not a ghost character, a contradiction. 
\end{proof}

We remark that the preimage $(h^*)^{-1}(\mathbf{x})$ of a point $\mathbf{x} \in F_2(K)$ 
consists of at most two distinct characters. In particular, if $\mathbf{x} \in F_2(K)$ is 
not a ghost character, then $(h^*)^{-1}(\mathbf{x})$ 
consists of exactly one point lying in $\mathrm{Im}(\widehat{\Phi})$.  
On the other hand, for a ghost character in $ F_2(K)$, 
the preimage under $h^*$ lies entirely outside $\mathrm{Im}(\widehat{\Phi})$. 
In other words, if $K$ admits a ghost character 
whose preimage under $h^*$ is non-empty, then there exists 
a representation $\rho_*: \Sigma_2(K) \to \SL_2(\C)$ that cannot be obtained, 
via the map $\widehat{\Phi}$, from any trace-free representation of $G(K)$. 
Such representations necessarily lie in the preimage of ghost characters under the map $h^*$. 
Thus, for any knot that does not admit a ghost character, the map $\widehat{\Phi}$ is surjective.  
(See Theorem 4.9 (1), (2) in \cite{Nagasato5}.)

Since, by \cite[Theorem 4.8]{Nagasato5}, 2-bridge and 3-bridge knots admit 
no ghost characters, the map $\widehat{\Phi}$ is surjective in these cases. 
(See \cite[Theorem 4.9 (1)]{Nagasato5}; see also 
\cite[Theorem 1, Lemma 23]{Nagasato-Yamaguchi} and \cite[Theorem 1.3]{Nagasato2}.) 
Therefore, in order to find a knot for which the map $\widehat{\Phi}: S_0(K) \to X(\Sigma_K)$
fails to be surjective, we focus on knots with bridge index greater than 3. 
The main subject of this paper, the (4,5)-torus knot $T_{4,5}$, is the simplest example 
among torus knots with bridge index greater than 3. 

\subsection{Proof of Theorem \ref{thm_main}}
The following computations are based on the master's thesis of the second author \cite{Suzuki}. 
As shown in Subsection \ref{subsec_ghost}, the $(4,5)$-torus knot $T_{4,5}$ admits 
a single ghost character $(x_{12},x_{13})=(-1,1) \in F_2(T_{4,5})$. 
Hence, by Theorem \ref{thm_nag}, 
if one can define a representation $\rho: \pi_1(\Sigma_2T_{4,5}) \to \SL_2(\C)$ satisfying 
\[
t_{m_1m_2}(\chi_{\rho})=-1,\ t_{m_1m_3}(\chi_{\rho})=1,
\]
then the map $\widehat{\Phi}$ is not surjective. 
Indeed, such a representation exists, leading to the following result.  
\begin{theorem}[cf. Theorem \ref{thm_main}]\label{thm_main2}
For the $(4,5)$-torus knot $T_{4,5}$, the map $\widehat{\Phi}$ is not surjective. 
That is, there exists an $\SL_2(\C)$-representation of $\pi_1(\Sigma_2T_{4,5})$ 
that cannot be obtained, via the map $\widehat{\Phi}$, from any trace-free 
$\SL_2(\C)$-representations of the knot group $G(T_{4,5})$. 
\end{theorem}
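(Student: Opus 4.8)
The plan is to reduce Theorem \ref{thm_main2} to the construction of a single representation $\rho: \pi_1(\Sigma_2 T_{4,5}) \to \SL_2(\C)$ whose associated point in $F_2(T_{4,5})$ is the ghost character found in Subsection \ref{subsec_ghost}. By Theorem \ref{thm_nag} (more precisely, by the map $h^*: X(\Sigma_2K) \to F_2(K)$ appearing in its proof), once such a $\rho$ exists the character $\chi_\rho$ must lie outside $\Im(\widehat{\Phi})$, and the theorem follows. Concretely it suffices to produce $\rho$ with $t_{m_1m_2}(\chi_\rho) = -1$ and $t_{m_1m_3}(\chi_\rho) = 1$: the remaining coordinates $(x_{14},x_{23},x_{24},x_{34}) = (-1,-1,1,-1)$ of the full ghost vector are then forced automatically, since $(t_{m_im_j}(\chi_\rho))$ always satisfies the fundamental relations (F2), and the resulting point $(-1,1,-1,-1,1,-1)$ is exactly the one shown in Subsection \ref{subsec_ghost} to violate the rectangle relation (R). Thus the entire content of the theorem is the existence of one representation hitting these two prescribed traces.

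First I would obtain a workable presentation of $\pi_1(\Sigma_2 T_{4,5})$. There are two complementary routes. Route (i): feed the $15$-generator Wirtinger presentation underlying Figure \ref{diagram_T45} into Fox's Theorem \ref{thm_Fox}, obtaining generators $m_1m_i$ $(1 \le i \le 15)$ with the relations $w(r_j), w(m_1r_jm_1^{-1})$, and then Tietze-reduce; this route keeps everything directly in the meridian coordinates, so the traces $t_{m_1m_2}$ and $t_{m_1m_3}$ are read off as traces of the generators $\rho(m_1m_2), \rho(m_1m_3)$. Route (ii): use the classical fact that $\Sigma_2 T_{4,5}$ is the Brieskorn manifold $\Sigma(2,4,5)$, whose group is a central extension of the orbifold (triangle-type) group $\Delta(2,4,5)$, with a presentation of the form $\langle x_1,x_2,x_3,h \mid [h,x_i]=1,\ x_1^2 = h^{b_1},\ x_2^4 = h^{b_2},\ x_3^5 = h^{b_3},\ x_1x_2x_3 = h^{b_0}\rangle$ for suitable integers $b_i$. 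Since $\tfrac12 + \tfrac14 + \tfrac15 = \tfrac{19}{20} < 1$, the base orbifold is hyperbolic and the group is infinite with an abundant supply of irreducible $\SL_2(\C)$ representations (e.g.\ deforming the discrete faithful representation into $\mathrm{PSL}_2(\R)$). I would use route (ii) to guarantee existence and transparency of the representation variety, and route (i), together with a dictionary between the two generating sets, to pin down the meridian traces.

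Next I would construct explicit matrices. A preliminary check shows the desired $\rho$ must be irreducible: since $H_1(\Sigma_2 T_{4,5}) = \Z/5$, every reducible character agrees with an abelian one factoring through $\Z/5$, whose traces all lie in $\{2,\ 2\cos(2\pi/5),\ 2\cos(4\pi/5)\}$, a set containing neither $-1$ nor $1$; hence no reducible $\rho$ can realize $(x_{12},x_{13}) = (-1,1)$. I would therefore parametrize an irreducible triple $(\rho(x_1),\rho(x_2),\rho(x_3))$ by the traces of the exceptional-fiber generators (equivalently by the rotation angles at the three cone points), with $h$ mapped to a central matrix $\pm I$, subject to the single scalar trace equation coming from $x_1x_2x_3 = h^{b_0}$. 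Translating $x_1,x_2,x_3$ into the words representing $m_1m_2$ and $m_1m_3$, I would then tune the remaining free trace parameter so as to solve $t_{m_1m_2}(\chi_\rho) = -1$ and $t_{m_1m_3}(\chi_\rho) = 1$ simultaneously; the computation that produced the ghost character already certifies that this finite polynomial system has a solution lying over $(x_{12},x_{13}) = (-1,1)$. Exhibiting explicit $\SL_2(\C)$ matrices and checking the finitely many defining relations by direct multiplication finishes the construction.

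The main obstacle is exactly this final matching step: confirming that the traces $-1$ and $1$ can be attained \emph{simultaneously} by an honest representation of $\pi_1(\Sigma_2 T_{4,5})$, rather than merely by a formal point of $F_2(T_{4,5})$. The subtlety is that the ghost point violates (R), so it cannot arise from any collection of trace-free meridian matrices of $G(T_{4,5})$; the representation we seek realizes these traces through genuine group elements $m_1m_i$ of the branched cover, for which (R) carries no force. Verifying that the candidate matrices satisfy all relations of $\pi_1(\Sigma_2 T_{4,5})$ and genuinely yield $(t_{m_1m_2},t_{m_1m_3}) = (-1,1)$ is therefore the crux; once this is done, the point $(t_{m_im_j}(\chi_\rho)) \in F_2(T_{4,5})$ is the ghost character, and Theorem \ref{thm_nag} immediately delivers the non-surjectivity of $\widehat{\Phi}$, hence Theorem \ref{thm_main2}.
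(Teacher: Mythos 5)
Your overall architecture is the same as the paper's: reduce Theorem \ref{thm_main2} to producing a single representation $\rho:\pi_1(\Sigma_2T_{4,5})\to\SL_2(\C)$ with $t_{m_1m_2}(\chi_\rho)=-1$ and $t_{m_1m_3}(\chi_\rho)=1$, note that the remaining coordinates are forced because $(t_{m_im_j}(\chi_\rho))$ lies in $F_2(T_{4,5})$, and then invoke Theorem \ref{thm_nag}; your irreducibility remark via $H_1(\Sigma_2T_{4,5})\cong\Z/5$ is also correct and consistent with the abelian characters listed at the end of the paper. However, there is a genuine gap at the crux, and you half-acknowledge it yourself: the sentence claiming that ``the computation that produced the ghost character already certifies that this finite polynomial system has a solution lying over $(x_{12},x_{13})=(-1,1)$'' is unjustified. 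The ghost character is merely a point of $F_2(T_{4,5})$, which is cut out by the fundamental relations (F2) alone. The proof of Theorem \ref{thm_nag} shows only that every character of $\pi_1(\Sigma_2K)$ maps \emph{into} $F_2(K)$, i.e.\ (F2) is a necessary condition on the tuple $(t_{m_im_j}(\chi_\rho))$; nothing in the paper's framework guarantees the converse, that a given point of $F_2(K)$ is in the image of $X(\Sigma_2K)$. Indeed, the entire subject of the paper is that points of $F_2(K)$ need not lift (to $S_0(K)$), and whether the ghost point lifts to $X(\Sigma_2T_{4,5})$ is precisely the question your construction must settle; the paper's closing observation that $X(\Sigma_2T_{4,5})$ hits all points of $F_2(T_{4,5})$ is a \emph{consequence} of its explicitly exhibited matrices, not an a priori fact you may cite. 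As written, your argument is circular exactly where the theorem's content lives.

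Your fallback plan is plausible but not executed, and one detail is off. Since $\Sigma_2T_{4,5}$ is the Brieskorn manifold $\Sigma(2,4,5)$, a Seifert fibered rational homology sphere over a hyperbolic triangle orbifold, its $\SL_2(\C)$-character variety is a \emph{finite} set: for an irreducible $\rho$ the central $h$ goes to $\pm I$, the exceptional-fiber generators have only finitely many admissible traces, and with $x_1x_2x_3=h^{b_0}$ central an irreducible triple with prescribed traces is rigid. So there is no ``remaining free trace parameter'' to tune; one must instead work out the dictionary expressing $m_1m_2$ and $m_1m_3$ in the Seifert generators and check the finitely many characters one by one --- none of which you carry out. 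The paper takes your route (i) instead: it computes $\pi_1(\Sigma_2T_{4,5})$ directly from the Wirtinger presentation via Theorem \ref{thm_Fox} (Lemma \ref{lem_pi_1s2k}, a presentation on $x=m_1m_2$, $y=m_1m_3$, $z=m_1m_4$ with six relators) and then writes down explicit matrices with traces $(-1,1,-1)$, involving a parameter $\alpha$ with $2\alpha^2+\alpha+2=0$, whose verification against the six relators completes the proof. Until you produce such matrices (or an equivalent existence certificate), the theorem is not proved.
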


\begin{proof}
First, we compute $\pi_1(\Sigma_2T_{4,5})$ using Theorem \ref{thm_Fox}.
\begin{lemma}[cf. Theorem \ref{thm_Fox}]\label{lem_pi_1s2k}
Let $G(T_{4,5})=\langle m_1,\cdots m_{15} \mid r_1,\cdots, r_{14} \rangle$ 
be the Wirtinger presentation\footnote{The last relator $r_{15}$ is eliminated 
due to deficiency-one condition.} 
associated with the diagram $D$ in Figure $\ref{diagram_T45}$.  
Set $x=m_1m_2, y=m_1m_3$ and $z=m_1m_4$. 
Then the fundamental group $\pi_1(\Sigma_2T_{4,5})$ is presented by 
\[
\pi_1(\Sigma_2T_{4,5}) \cong \langle x,y,z \mid w_i\ (1 \leq i \leq 3),
m_j^2\ (1 \leq j \leq 4 )\rangle, 
\]
where the relators $w_i$ are as follows:
\begin{eqnarray*}
w_1 &=& z^{-1} x^{-1} y z^{-1} x z^{-1} y x^{-1} z^{-1},\\
w_2 &=& z^{-1} x^{-1} y z^{-1} y z^{-1} y x^{-1} z^{-1} x,\\
w_3 &=& z^{-1} x^{-1} y z^{-1} y x^{-1} z^{-1} y.
\end{eqnarray*}
\end{lemma}

We illustrate the computation of $\pi_1(\Sigma_2 T_{4,5})$ in Lemma \ref{lem_pi_1s2k}. 
To begin with, using the relators $r_1,\cdots,r_{14}$ of $G(T_{4,5})$, we obtain 
\begin{eqnarray*}
m_5    &=& m_1 m_2 m_1^{-1},\\
m_6    &=& m_1 m_3 m_1^{-1},\\
m_7    &=& m_1 m_4 m_1^{-1},
\end{eqnarray*}
\begin{eqnarray*}
m_8    &=& m_5 m_6 m_5^{-1} = m_1 m_2 m_3 m_2^{-1} m_1^{-1},\\
m_9    &=& m_5 m_7 m_5^{-1} = m_1 m_2 m_4 m_2^{-1} m_1^{-1},\\
m_{10} &=& m_5 m_1 m_5^{-1} = m_1 m_2 m_1 m_2^{-1} m_1^{-1},\\
m_{11} &=& m_8 m_9 m_8^{-1} = m_1 m_2 m_3 m_4 m_3^{-1} m_2^{-1} m_1^{-1},\\
m_{12} &=& m_8 m_{10} m_8^{-1} = m_1 m_2 m_3 m_1 m_3^{-1} m_2^{-1} m_1^{-1},\\
m_{13} &=& m_8 m_5 m_8^{-1} = m_1 m_2 m_3 m_2 m_3^{-1} m_2^{-1} m_1^{-1},\\
m_{14} &=& m_{11} m_{13} m_{11}^{-1} = m_1 m_2 m_3 m_4 m_2 m_4^{-1} m_3^{-1} m_2^{-1} m_1^{-1},\\
m_{15} &=& m_{11} m_8 m_{11}^{-1} = m_1 m_2 m_3 m_4 m_3 m_4^{-1} m_3^{-1} m_2^{-1} m_1^{-1},\\
m_1    &=& m_4 m_{14} m_4^{-1} = m_4 m_1 m_2 m_3 m_4 m_2 m_4^{-1} m_3^{-1} m_2^{-1} 
m_1^{-1} m_4^{-1},\\
m_2    &=& m_4 m_{15} m_4^{-1} = m_4 m_1 m_2 m_3 m_4 m_3 m_4^{-1} m_3^{-1} m_2^{-1} 
m_1^{-1} m_4^{-1},\\
m_3    &=& m_4 m_{11} m_4^{-1} = m_4 m_1 m_2 m_3 m_4 m_3^{-1} m_2^{-1} m_1^{-1} m_4^{-1}.
\end{eqnarray*}
By Tietze transformations, 
the first 11 relations imply that $G(T_{4,5})$ is generated by $m_1,m_2,m_3,m_4$, 
and then the set of relators of $G(T_{4,5})$ is normally generated 
by the last 3 relations. Thus, we obtain
\[
G(T_{4,5})=\langle m_1,m_2,m_3,m_4 \mid w_1,w_2,w_3 \rangle, 
\]
where $w_1$, $w_2$ and $w_3$ are given by 
\begin{eqnarray*}
w_1 &=& m_4 m_1 m_2 m_3 m_4 m_2 m_4^{-1} m_3^{-1} m_2^{-1} m_1^{-1} m_4^{-1} m_1^{-1},\\
w_2 &=& m_4 m_1 m_2 m_3 m_4 m_3 m_4^{-1} m_3^{-1} m_2^{-1} m_1^{-1} m_4^{-1} m_2^{-1},\\
w_3 &=& m_4 m_1 m_2 m_3 m_4 m_3^{-1} m_2^{-1} m_1^{-1} m_4^{-1} m_3^{-1}.
\end{eqnarray*} 
Consequently, by Theorem \ref{thm_Fox}, we have 
\begin{eqnarray*}
\pi_1(\Sigma_2 T_{4,5})
& \cong &  \langle m_1m_2, m_1m_3, m_1m_4 
\mid w_i (1 \leq i \leq 6), m_j^2\ (1 \leq j \leq 4 )\rangle, 
\end{eqnarray*}
where $w_4 = m_1 w_1 m_1^{-1}, w_5 = m_1 w_2 m_1^{-1}, w_6 = m_1 w_3 m_1^{-1}$. 
Note that the relators $w_j$ ($1 \leq j \leq 6$) are expressed
as words in $m_1m_i$ ($1 \leq i \leq 4$). 
For simplicity, let $x=m_1m_2, y=m_1m_3, z=m_1m_4$. Then the relators become
\begin{eqnarray*}
w_1 &=& z^{-1} x^{-1} y z^{-1} x z^{-1} y x^{-1} z^{-1},\\
w_2 &=& z^{-1} x^{-1} y z^{-1} y z^{-1} y x^{-1} z^{-1} x,\\
w_3 &=& z^{-1} x^{-1} y z^{-1} y x^{-1} z^{-1} y,\\
w_4 &=& z x y^{-1} z x^{-1} z y^{-1} x z,\\
w_5 &=& z x y^{-1} z y^{-1} z y^{-1} x z x^{-1},\\
w_6 &=& z x y^{-1} z y^{-1} x z y^{-1}. 
\end{eqnarray*}
We remark that $w_6=w_1^{-1}$, $w_7=x^{-1}w_2^{-1}x$ and $w_8=y^{-1}w_3y$ hold. 
Thus, by Tietze transformations, the relators $w_6$, $w_7$ and $w_8$ are omitted. 
This shows Lemma \ref{lem_pi_1s2k}.  

To complete the proof of Theorem \ref{thm_main2} (Theorem \ref{thm_main}), 
we show that there exists 
a representation $\rho_*: \pi_1(\Sigma_2T_{4,5}) \to \SL_2(\C)$ satisfying 
\[
\tr(\rho_*(m_1m_2))=\tr(\rho_*(m_1m_4))=-1,\ \tr(\rho_*(m_1m_3))=1.
\] 
Since $\tr(\rho_*(m_1m_2)) \neq 2$, 
the representation $\rho_*$ can be conjugated so that 
\[
(\rho_*(m_1m_2),\rho_*(m_1m_3),\rho_*(m_1m_4))
=\left(
\left(\begin{array}{cc}a & 0\\ 0 & a^{-1} \end{array}\right),
\left(\begin{array}{cc}b & c\\ d & e \end{array}\right),
\left(\begin{array}{cc}f & g\\ h & i \end{array}\right) 
\right) \in \SL_2(\C)^3.
\]
Then, we obtain the following representations  
$\rho_*=\rho_{\alpha_i}: \pi_1(\Sigma_2 T_{4,5}) \to \SL_2(\C)$ for $i=1,2$, 
where $\alpha_i$ is a root of the equation $2\alpha^2+\alpha+2=0$. Let $\i:=\sqrt{-1}$. 
\begin{eqnarray*}
&&(\rho_{\alpha_i}(m_1m_2),\rho_{\alpha_i}(m_1m_3),\rho_{\alpha_i}(m_1m_4))\\
&=&\left(
\left(\begin{array}{cc}e^{\frac{2}{3}\pi \i} & 0\\ 0 & e^{-\frac{2}{3}\pi \i} \end{array}\right),
\left(\begin{array}{cc}-\frac{\i}{\sqrt{3}}e^{\frac{\pi}{3}\i} & -\frac{2}{3}\\
 1 & \frac{\i}{\sqrt{3}}e^{-\frac{\pi}{3}\i} \end{array}\right),
\left(\begin{array}{cc}\frac{\i}{\sqrt{3}}e^{\frac{\pi}{3}\i} & \frac{1+2\alpha_i}{3}\\
\alpha & -\frac{\i}{\sqrt{3}}e^{-\frac{\pi}{3}\i} \end{array}\right) 
\right). 
\end{eqnarray*}
One can easily check that 
\begin{eqnarray*}
&& \tr(\rho_{\alpha_i}(m_1m_2))=2\cos\left(\frac{2}{3}\pi\right)=-1,\\
&& \tr(\rho_{\alpha_i}(m_1m_3))=-\frac{\i}{\sqrt{3}} \cdot 2\i \sin\left(\frac{\pi}{3}\right)=1,\\
&&\tr(\rho_{\alpha_i}(m_1m_4))=\frac{\i}{\sqrt{3}} \cdot 2\i \sin\left(\frac{\pi}{3}\right)=-1. 
\end{eqnarray*}
By Theorem \ref{thm_nag}, this indicates that $\rho_{\alpha_i}$ for $i=1,2$ cannot be obtained 
via $\widehat{\Phi}$ from any trace-free representations of $G(T_{4,5})$, completing the proof. 
\end{proof}

Additionally, we obtained the following representations 
$\rho_*: \pi_1(\Sigma_2T_{4,5}) \to \SL_2(\C)$, corresponding to the remaining points of 
$F_2(T_{4,5})$: 
\[
\begin{array}{lll}
\rho_*(m_1m_2)=
\left(\begin{array}{cc}e^{\frac{2\pi k}{5}\i} & 0\\ 0 & e^{-\frac{2\pi k}{5}\i} \end{array}\right)
& \Rightarrow & 
\tr(\rho_*(m_1m_2))=2\cos\left(\frac{2\pi k}{5}\right)=2\mbox{ or }\frac{-1 \pm \sqrt{5}}{2},\\
\rho_*(m_1m_3)=
\left(\begin{array}{cc}1 & 0\\ 0 & 1 \end{array}\right)
& \Rightarrow & 
\tr(\rho_*(m_1m_3))=2,\\
\rho_*(m_1m_4)=
\left(\begin{array}{cc}e^{\frac{2\pi k}{5}\i} & 0\\ 0 & e^{-\frac{2\pi k}{5}\i} \end{array}\right)
& \Rightarrow &
\tr(\rho_*(m_1m_4))=2\cos\left(\frac{2\pi k}{5}\right)=2\mbox{ or }\frac{-1 \pm \sqrt{5}}{2},\\
\end{array}
\]
where $k=0,1,2,3,4$, 
\[
\begin{array}{lll}
\rho_*(m_1m_2)=
\left(\begin{array}{cc} \frac{3 \pm \sqrt{5}+\beta}{4} & 0\\
0 & \frac{3 \pm \sqrt{5}-\beta}{4} \end{array}\right)
& \Rightarrow & 
\tr(\rho_*(m_1m_2))=\frac{3 \pm \sqrt{5}}{2},\\
\rho_*(m_1m_3)=
\left(\begin{array}{cc}\frac{66(1 \pm \sqrt{5})+26\beta+\beta^3}{132} & \frac{1 \pm 3\sqrt{5}}{11}\\
1 & \frac{66(1 \pm \sqrt{5})-26\beta-\beta^3}{132} \end{array}\right)
& \Rightarrow & 
\tr(\rho_*(m_1m_3))=1 \pm \sqrt{5},\\
\rho_*(m_1m_4)=
\left(\begin{array}{cc}\frac{33(3 \pm \sqrt{5})-7\beta+\beta^3}{132} & \frac{1 \pm 3\sqrt{5}}{11}\\
1 & \frac{33(3 \pm \sqrt{5})+7\beta-\beta^3}{132} \end{array}\right)
& \Rightarrow &
\tr(\rho_*(m_1m_4))=\frac{3 \pm \sqrt{5}}{2}, 
\end{array}
\]
where $\beta=\sqrt{-2\pm 6\sqrt{5}}$. 
These representations verify the surjectivity 
of the map $h^*: X(\Sigma_2 T_{4,5}) \to F_2(T_{4,5})$. 


\section{Another counterexample of Ng's conjecture}\label{Ng_conj}
It has been shown that the (4,5)-torus knot $T_{4,5}$ is the simplest torus knots  
that provides a counterexample to Ng's conjecture, 
since $T_{4,5}$ admits a ghost character, whereas no simpler torus knot does 
(see Theorem 4.10 in \cite{Nagasato5}). We investigate this phenomenon in detail below. 
 
As noted earlier, a representation $\rho_*: \pi_1(\Sigma_2 T_{4,5}) \to \SL_2(\C)$ 
determines a point in $F_2(T_{4,5})$ via the traces: 
\[
(\tr(\rho_*(m_1m_2)), \tr(\rho_*(m_1m_3))).
\] 
In general, this correspondence is formulated by the map  
\[
h^*: X(\Sigma_2K) \to F_2(K), 
\]
defined by $h^*((z_{ab};y_{def}))=(z_{ab})$. 

Ng's conjecture asserts that $h^*$ is an isomorphism for any knot 
(see \cite[Conjectures 4.3, 4.4]{Nagasato5}; see also \cite[Conjecture 5.7]{Ng2}).  
As shown in \cite{Nagasato6}, the (5,6)-torus knot $T_{5,6}$ provides 
the first counterexample to this conjecture. 
More precisely, $T_{5,6}$ admits three ghost characters. 
These ghost characters demonstrate that the map $h^*$ is neither surjective nor injective, 
and hence not an isomorphism. 

In contrast, the (4,5)-torus knot $T_{4,5}$ also admits a single ghost character 
$\mathbf{g}=(-1,1)$. 
However, the preimage of this ghost character $\mathbf{g}$ 
under the map $h^*$ is non-empty, and hence $h^*$ is surjective. 
On the other hand, the computations from the previous section show  
that the preimage $(h^*)^{-1}((-1,1))$ consists of exactly two distinct points. 
Indeed, for the representations $\rho_{\alpha_i}: \pi_1(\Sigma_2T_{4,5}) \to \SL_2(\C)$ 
($i=1,2$) defined earlier, one can verify that the traces
\[
y_{234}(\chi_{\rho_{\alpha_i}})=t_{(m_1m_2)(m_1m_3)(m_1m_4)}(\chi_{\rho_{\alpha_i}}),
\]
are distinct. Hence the characters $\chi_{\rho_{\alpha_1}}$ and $\chi_{\rho_{\alpha_2}}$ 
represent distinct points in $X(\Sigma_2T_{4,5})$. 
This shows that $h^*$ is not injective, and therefore not an isomorphism. 
Consequently, Ng's conjecture does not hold for $T_{4,5}$ either. 
This example provides the first known case in which the map $h^*$ is 
surjective but not injective. 

As shown in Theorem 4.8 in \cite{Nagasato5}, all 2-bridge and 3-bridge knots 
admit no ghost characters. Then, by Theorem 4.10 in \cite{Nagasato5}, 
the map $h^*$ is an isomorphism for such knots. Since any torus knot simpler than $T_{4,5}$ 
has bridge index less than 4, the (4,5)-torus knot is the simplest torus knot with this property. 
 

\section*{Acknowledgements}
The authors would like to thank Yoshikazu Yamaguchi for helpful comments 
on $\tau$-equivalent representations and ghost characters. 
The first author was partially supported during the early stages of this research
by JSPS Research Fellowships for Young Scientists, JSPS KAKENHI for Young Scientists (Start-up), 
MEXT KAKENHI for Young Scientists (B) and JSPS KAKENHI for Young Scientists (B).  
The present work was partially supported by JSPS KAKENHI (C) Grant Number 20K03619.


\end{document}